\begin{document}

\newtheorem{theorem}{Theorem}
\newtheorem{corollary}[theorem]{Corollary}
\newtheorem{definition}[theorem]{Definition}
\newtheorem{conjecture}[theorem]{Conjecture}
\newtheorem{question}[theorem]{Question}
\newtheorem{lemma}[theorem]{Lemma}
\newtheorem{proposition}[theorem]{Proposition}
\newtheorem{example}[theorem]{Example}
\newenvironment{proof}{\noindent {\bf
Proof.}}{\rule{3mm}{3mm}\par\medskip}
\newcommand{\remark}{\medskip\par\noindent {\bf Remark.~~}}
\newcommand{\pp}{{\it p.}}
\newcommand{\de}{\em}

\title{The anti-Ramsey number for paths}
\author{Long-Tu Yuan\thanks{School of Mathematical Sciences and Shanghai Key Laboratory of PMMP, East China Normal University, 500 Dongchuan Road, Shanghai 200240, P.R.  China.
Email: ltyuan@math.ecnu.edu.cn. Supported in part by National Natural Science Foundation of China grant 11901554 and Science and Technology Commission
of Shanghai Municipality (No. 18dz2271000, 19jc1420100).}
}
\date{}
\maketitle
\begin{abstract}
We determine the exactly anti-Ramsey number for paths. This confirms a conjecture posed by Erd\H{o}s, Simonovits and S\'{o}s in 1970s.
\end{abstract}

{{\bf Key words:} Anti-Ramsey numbers; paths.}

{{\bf AMS Classifications:} 05C35.}
\vskip 0.5cm

\section{Introduction}
A subgraph of an edge-colored graph is {\it rainbow} if all of its edges have different colors. For a given graph $H$, the anti-Ramsey number AR$(n,H)$ of $H$ is the maximum number of colors  in an edge-colored $K_n$ such that $K_n$ does not contain a copy of rainbow $H$.

The anti-Ramsey number was introduced by Erd\H{o}s, Simonovits and S\'{o}s \cite{erdossim1973}.
In the same paper, they observed that if we color a copy of $K_{k-2}$ in $K_n$ with different colors and color the remaining edges a new color, then $K_n$ contains no rainbow paths on $k$ vertices.
Let $X\subseteq V(K_n)$ with size $\lfloor(k-3)/2\rfloor$ and let $i=1$ if $k$ is odd and $i=2$ if $k$ is even.
If we color the edges incident $X$ with different colors and color the remaining edges with $i$ new colors, then we can easily check that $K_n$ does not contain rainbow paths on $k$ vertices.
Hence, they asked whether those two configurations are best possible.

Denote by $P_k$ the path on $k$ vertices. Let $t=\lfloor(k-3)/2\rfloor$. In \cite{simsos1984anpath}, Simonovits and S\'{o}s determined AR$(n,P_k)$ for  $n\geq c_1t^2$, where $c_1$ is a constant.
They also claimed that their result held for $n\geq 5t/2+c_2$, where $c_2$ is a constant (without proof).
The exactly anti-Ramsey number for paths is still not know.
For other related results on this topic we refer the interested readers to a survey of Fujita, Magnant, and Ozeki \cite{Fujita2010} and some new results as \cite{meilu2019anti,Gorgol2016,Gu2020,Jahan2016,Jiang2009,Lan2019,Lu2020,Yuan2020,Yuan2020+}.

The main result of this paper is the following theorem which settles an old conjecture posed by Erd\H{o}s, Simonovits and S\'{o}s \cite{erdossim1973} almost fifty years ago.

\begin{theorem}{\bf \noindent.}\label{THM:main}
Let $P_k$ be a path on $k$ vertices and $\ell=\lfloor(k-1)/2\rfloor$. If $n\geq k\geq 5$,
then
\begin{equation*}
\emph{AR}(n,P_k)=\max\left\{{k-2 \choose 2}+1,{\ell-1 \choose 2}+(\ell-1)(n-\ell+1)+\epsilon\right\},
\end{equation*}
where $\epsilon=1$ if $k$ is odd and $\epsilon=2$ otherwise.
\end{theorem}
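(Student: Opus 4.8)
The plan is to prove the lower and upper bounds separately; write $M$ for the right-hand side of the asserted identity.

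\textbf{Lower bound.}
For each of the two terms in the maximum I exhibit an edge-colouring of $K_n$ attaining it and containing no rainbow $P_k$. For the first term, fix $S\subseteq V(K_n)$ with $|S|=k-2$, colour the $\binom{k-2}{2}$ edges inside $S$ with distinct colours and all remaining edges with one new colour; then a rainbow subgraph uses at most one edge meeting $V(K_n)\setminus S$, so a rainbow path lies inside $S$ apart from at most one extra vertex and therefore spans at most $k-1$ vertices. For the second term, fix $X\subseteq V(K_n)$ with $|X|=\ell-1=\lfloor(k-3)/2\rfloor$, colour the $\binom{\ell-1}{2}+(\ell-1)(n-\ell+1)$ edges meeting $X$ with distinct colours, and colour the edges of $K_n-X$ with $\epsilon$ further colours; a rainbow $P_k$ would have at most $\epsilon$ of its $k-1$ edges avoiding $X$ and hence at least $k-1-\epsilon$ edges meeting $X$, but each vertex of $X$ lies on at most two edges of a path, giving $k-1-\epsilon\le 2(\ell-1)$, which fails for both parities by the definitions of $\ell$ and $\epsilon$. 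Hence $\mathrm{AR}(n,P_k)\ge M$.

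\textbf{Upper bound: set-up and reduction.}
Suppose $K_n$ is coloured with $N$ colours and contains no rainbow $P_k$; I will show $N\le M$ by induction on $n\ge k$. Fix a representing subgraph $R$ consisting of one edge of each colour: it is rainbow, $|E(R)|=N$, and it is $P_k$-free, so $N\le\mathrm{ex}(n,P_k)$, and each component of $R$ is a connected $P_k$-free graph. Two features of $R$ are used repeatedly: its $N$ edges have pairwise distinct colours \emph{globally}, so colours lying in different components of $R$ (or on disjoint parts of one component) are automatically different; and every edge of $K_n$ not in $R$ repeats a colour of $R$, so it may be used to lengthen or reroute a rainbow subpath of $R$ as long as one avoids the unique $R$-edge of that colour. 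For a bounded initial range of $n$ (in particular $n=k$, where $\binom{k-2}{2}+1$ may attain the maximum) the $P_k$-free possibilities for $R$ are severely limited---$R$ is essentially a $K_{k-2}$ with a few further low-degree vertices---and a Hamilton path of this $K_{k-2}$ extended by one fresh-coloured edge at each end gives the bound. For $n$ beyond this range the inductive step runs as follows: if some vertex $v$ is the sole representative of at most $\ell-1$ colours (so deleting $v$ destroys at most $\ell-1$ colours), delete it and invoke the inductive hypothesis on $K_{n-1}$, using that $M$ then increases by exactly $\ell-1$ when $n$ increases by one; otherwise every vertex carries at least $\ell$ privately owned colours, which pushes $N$ high enough that $R$ must contain a near-extremal $P_k$-free configuration.

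\textbf{Upper bound: the core, and the main obstacle.}
It remains to produce a rainbow $P_k$ whenever $R$ contains such a near-extremal configuration. If $R$ is (close to) a vertex-disjoint union of large cliques, choose components of total order at least $k$ and thread Hamilton subpaths of them together through cross edges of $K_n$; this works because colours from different cliques are distinct and because in $K_m$ with $m\ge4$ there is a Hamilton path ending at any prescribed vertex and avoiding any prescribed edge. If $R$ contains a book-like component $K_X+\overline{K}$, take a longest alternating rainbow path through $X$---already on $2|X|+1$ vertices---and adjoin one more vertex through an edge whose colour is new to the path; for $n$ large this follows from pigeonhole (the endpoint has more external neighbours than there are colours on the path), which is the regime handled by Simonovits and S\'os, but for $n$ close to $k$, and especially for $k$ even where $|X|$ may equal $\ell$---one larger than in the extremal colouring---one must instead vary the alternating path, use edges incident to $X$, and bound exactly how many colours a book-like configuration blocking every such extension can carry; it is here that the additive constant $\epsilon$, and the appearance of $\binom{k-2}{2}+1$, must be pinned down. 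I expect this last case---$k$ even, $n$ close to $k$, with $R$ resembling the second lower-bound colouring but with an extra clique vertex---to be the main obstacle, since proving that that extra vertex and the extra colours it permits always create a rainbow $P_k$ is precisely the point that upgrades the asymptotic theorem of Simonovits and S\'os to the exact one.
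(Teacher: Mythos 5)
Your lower bound is correct and is exactly the paper's (the two colourings of Erd\H{o}s--Simonovits--S\'os), and your starting point for the upper bound --- a representing subgraph $R$ with one edge per colour is rainbow, hence $P_k$-free --- is also the paper's. But the heart of the upper bound is missing. Your argument hinges on the assertion that once $N>M$, the graph $R$ ``must contain a near-extremal $P_k$-free configuration'' of one of a few explicit shapes (clique-like, or book-like $K_X+\overline{K}$). That assertion is a stability theorem for the Erd\H{o}s--Gallai path theorem valid for \emph{every} $n\ge k$, and it is precisely the deep external input the paper relies on (the theorems of F\"uredi, Kostochka, Luo and Verstra\"ete, quoted as Corollaries~\ref{Coro:1} and~\ref{Coro:2}): a connected $P_k$-free graph with more than $\max\{h(n,k-1,2),h(n,k-1,\ell-1)\}$ edges must be a subgraph of $H(n,k-1,1)$, $H(n,k-1,\ell)$, or equal to $H(n,k-1,\ell-1)$. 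You neither prove nor cite such a result, and the minimum-degree-type condition you extract from the failed induction step (every vertex privately owns at least $\ell$ colours) does not supply it --- it only gives $N\gtrsim n\ell/2$, which is \emph{below} $M\approx(\ell-1)n$ for $\ell\ge 3$, so it adds nothing to the standing hypothesis $N>M$.

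Two further concrete gaps. First, your induction on $n$ only advances when the second term of the maximum is active, so the ``bounded initial range'' you must treat directly is all of $k\le n\lesssim 5\ell/2$; in that range it is false that a $P_k$-free $R$ with $\binom{k-2}{2}+1$ edges is ``essentially a $K_{k-2}$'' --- the graphs $H(n,k-1,\ell-1)$ and (for $k$ even) $H(n,k-1,\ell)$ are competing near-extremal structures with comparably many edges, and this range is where the real work lies. Second, the disconnected case is not handled: when $R$ has several components, the cross edges of $K_n$ you want to thread with may repeat colours of each other or of edges inside the components being used; the paper needs a dedicated mechanism for this (the ``good colouring'' Lemma~\ref{Lem:good coloring}, which forces all cross edges to carry colours of cut edges of the large component, together with the numerical Lemma~\ref{Lemma:L-is-nonconnected}). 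Finally, you yourself flag the case $k$ even, $n$ close to $k$, $R\subseteq H(n,k-1,\ell)$ as an unresolved ``main obstacle'' --- in the paper this is closed by Lemma~\ref{P2l} (finding a rainbow $C_{2\ell}$ in the bipartite part and extending it by two vertices) --- so the proposal is an outline with the decisive steps still open rather than a proof.
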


The extremal results of an $n$-vertex graph  proved by stability results usually need $n$ to be sufficiently large.
Our proof of Theorem~\ref{THM:main} bases on a recently    result of F\"{u}redi, Kostochka, Luo and Verstra\"{e}te \cite{Furedi2016,Furedi2018} holding for graphs with arbitrary number of vertices.
Hence, we can apply the stability results to determine the exactly anti-Ramsey number for paths.
It is very interesting to obtain other exactly extremal result by stability method.

\section{Notation and basic lemmas}
Given two graphs $G$ and $H$, we say that $G$ is {\it $H$-free} if $G$ does not contain a copy of $H$ as a subgraph.
For a given graph $H$, the Tur\'{a}n number  ex$(n,H)$ of $H$ is the maximum number of edges in an $n$-vertex $H$-free graph.
Similarly, the connected Tur\'{a}n number of a given graph $H$, denoted by ex$_{\mbox{con}}(n,H)$, is the maximum number of edges of an $n$-vertex $H$-free connected graph.
The anti-Ramsey problem is strongly connected with the Tur\'{a}n problem.
So we introduce some results about Tur\'{a}n problem first.

Erd\H{o}s and Gallai \cite{erdHos1959maximal} first studied the Tur\'{a}n numbers of paths. Later, Faudree and Schelp \cite{Faudree1975} and independently Kopylov \cite{Kopylov1977} improved Erd\H{o}s and Gallai's result to the following.

\begin{theorem}[Faudree, Schelp \cite{Faudree1975} and Kopylov \cite{Kopylov1977}]\label{THM:path}
Let $n\geq k$. Then $\emph{ex}(n,P_k)=s{k-1 \choose 2}+{r \choose 2}$, where $n=s(k-1)+r$ and $0\leq r\leq k-2$. Moreover, the extremal graphs are characterized.
\end{theorem}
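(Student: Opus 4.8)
The plan is to prove the matching lower and upper bounds separately, and to read off the extremal characterisation from the equality analysis in the upper bound.

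For the lower bound I would simply exhibit the extremal graph. Writing $n=s(k-1)+r$ with $0\le r\le k-2$, let $G_0$ be the disjoint union of $s$ copies of $K_{k-1}$ together with one $K_r$. Every component has at most $k-1$ vertices and hence contains no $P_k$, so $G_0$ is $P_k$-free, while $e(G_0)=s{k-1\choose 2}+{r\choose 2}$. Thus $\mathrm{ex}(n,P_k)\ge s{k-1\choose 2}+{r\choose 2}$, and it remains to prove the upper bound.

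For the upper bound I would induct on $n$, writing $f(m)=s'{k-1\choose 2}+{r'\choose 2}$ for $m=s'(k-1)+r'$, $0\le r'\le k-2$, and aiming to show $e(G)\le f(m)$ for every $P_k$-free graph $G$ on $m$ vertices; the base cases $m\le k-1$ are immediate since then $e(G)\le {m\choose 2}\le f(m)$. The first real step is to reduce to connected graphs: if $G$ is disconnected, split off a component on $n_1$ vertices and apply the induction hypothesis to the two parts, so that $e(G)\le f(n_1)+f(n-n_1)$. It then suffices to verify the superadditivity $f(a)+f(b)\le f(a+b)$, which reduces to the inequality ${x+y\choose 2}\ge {x\choose 2}+{y\choose 2}$ together with a short computation handling the ``carry'' case $r_a+r_b\ge k-1$. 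This leaves the connected case, which is the heart of the argument. For a connected $P_k$-free $G$ with $n\ge k$ I would work with a longest path $P=v_0v_1\cdots v_t$: since $G$ is $P_k$-free we have $t\le k-2$, and maximality forces $N(v_0),N(v_t)\subseteq V(P)$. Applying P\'osa rotations that fix the endpoint $v_0$, the set of reachable endpoints has all of its neighbours inside $V(P)$ and is internally very dense, so it either exhibits a $P_k$ (a contradiction) or produces a small separator of $G$. Combined with the standard bounds for $2$-connected graphs containing no long path, this yields $e(G)\le f(n)$, closing the induction.

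Finally, the characterisation comes from tracing the inequalities at equality. Equality in the disconnected step forces $G$ to be a disjoint union of cliques, and among disjoint unions of cliques with all parts of size at most $k-1$ the quantity $\sum_i {n_i\choose 2}$ is maximised, by strict convexity of ${\,\cdot\,\choose 2}$, uniquely by $sK_{k-1}\cup K_r$ (up to isolated vertices). The main obstacle is precisely the connected case: a connected $P_k$-free graph need not contain a removable $K_{k-1}$, so one cannot simply peel off a clique and induct by dropping $k-1$ vertices; the rotation argument together with the $2$-connected long-path analysis needed to bound such graphs directly by $f(n)$ is the delicate part of the proof, and it is exactly this connected estimate (sharper than the Erd\H{o}s--Gallai bound $e(G)\le \tfrac{k-2}{2}n$) that upgrades the asymptotic result to the exact one.
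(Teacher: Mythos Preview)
The paper does not give a proof of this statement at all: Theorem~\ref{THM:path} is quoted from \cite{Faudree1975,Kopylov1977} as a known result and used as a black box (notably in the proof of Lemma~\ref{Lemma:L-is-nonconnected} in Appendix~A). So there is no ``paper's own proof'' to compare against.

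Evaluated on its own, your outline follows the Faudree--Schelp strategy: the lower bound construction and the superadditivity reduction to connected graphs are correct. However, the connected step is only gestured at. Saying ``P\'osa rotations combined with the standard bounds for $2$-connected graphs containing no long path yield $e(G)\le f(n)$'' hides the entire substance of the argument; you have not explained how rotations produce a small separator, nor which $2$-connected bound you mean (Kopylov's theorem, essentially Theorem~\ref{THM:connected-path} here, is itself nontrivial and is precisely what is needed). As written this is a plan, not a proof.

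There is also a genuine error in the extremal characterisation. You assert that equality forces $G$ to be a disjoint union of cliques, and that convexity then pins down $sK_{k-1}\cup K_r$ uniquely. This is false: for $k=4$ and $n=5$ we have $f(5)=4$, and the star $K_{1,4}$ is a connected $P_4$-free graph with $4$ edges, so it is extremal yet not a union of cliques. In general the extremal family includes graphs of the form $s'K_{k-1}\cup H$ where $H$ is one of the connected extremal graphs from Theorem~\ref{THM:connected-path}; the equality analysis in your disconnected step does not force the components to be cliques, only to be extremal for their own orders.
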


For connected graphs without containing a copy of $P_k$, Balister, Gy\H{o}ri, Lehel and Schelp \cite{Balister2008} and independently Kopylov \cite{Kopylov1977} proved the following theorem.

We first introduce the  following graphs which play an important role in extremal problems for paths and cycles.
For integers $n\geq k\geq 2a$, let $H(n,k,a)$ be the $n$-vertex graph whose vertex set is partitioned into three sets $A, B, C $ such that $|A|=a, |B|=n-k+a$ and $|C|=k-2a$
and the edge set consists of all edges between $A$ and $B$ together with all edges in $A\cup C$.
Let $h(n,k,a)=e(H(n,k,a))$.

\begin{theorem}[Balister, Gy\H{o}ri, Lehel,  Schelp \cite{Balister2008} and Kopylov \cite{Kopylov1977}]\label{THM:connected-path}
Let $n\geq k$ and $s=\lfloor(k-2)/2\rfloor$. Then $\emph{ex}_{\emph{con}}(n,P_k)=\max\{h(n,k-1,1),h(n,k-1,s)\}$. Moreover, the extremal graph is either $H(n,k-1,1)$ or $H(n,k-1,s)$.
\end{theorem}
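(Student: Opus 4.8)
\medskip
\noindent\textbf{Proof proposal.}
Put $m=k-1$ and $s=\lfloor(k-2)/2\rfloor$. The graphs $H(n,m,1)$ and $H(n,m,s)$ are connected, and a direct inspection shows that their longest path has exactly $m=k-1$ vertices, so each is $P_k$-free; hence only the upper bound $\mathrm{ex}_{\mathrm{con}}(n,P_k)\le\max\{h(n,m,1),h(n,m,s)\}$ needs proof. One elementary tool is used throughout: viewed as a function of $a$, $h(n,m,a)={m-a \choose 2}+a(n-m+a)$ is convex, so its maximum over any interval of integers is attained at an endpoint; in particular $h(n,m,a)\le\max\{h(n,m,1),h(n,m,s)\}$ whenever $1\le a\le s$.

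I would fix an edge-maximal connected $P_k$-free graph $G$ on $n\ge k$ vertices and argue by cases according to whether $G$ is $2$-connected; the overall scheme is Kopylov's. Suppose first that $G$ is $2$-connected. I would begin by showing that $G$ has circumference at most $k-2$: a cycle on $\ge k$ vertices already contains $P_k$; and if $G$ contained a cycle $C$ on $k-1$ vertices then, since $n\ge k$, some vertex $w\notin V(C)$ exists, and the fan lemma provides two paths from $w$ to $C$ that are internally disjoint and end in distinct vertices $x,y\in V(C)$, so that listing the $x$--$y$ path through $w$ together with the two arcs of $C$ in the appropriate order gives a path meeting all $k-1$ vertices of $C$ and at least one further vertex, hence a path on at least $k$ vertices---a contradiction. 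I would then invoke Kopylov's theorem that an $n$-vertex $2$-connected graph of circumference at most $c$ has at most $\max\{h(n,c+1,2),h(n,c+1,\lfloor c/2\rfloor)\}$ edges; with $c=k-2$ this reads $e(G)\le\max\{h(n,m,2),h(n,m,s)\}\le\max\{h(n,m,1),h(n,m,s)\}$ by the convexity noted above (the value $k=5$, where $s=1$, calling for a short separate check). Kopylov's circumference bound is itself proved by the disintegration method: for a threshold $\tau$ let $G_\tau$ be obtained from $G$ by iteratively deleting vertices of degree less than $\tau$; a Dirac--Bondy type lemma guarantees that a large $G_\tau$ would, via the $2$-connectivity of $G$, force a cycle of length at least $\min\{n,2\tau\}$ in $G$, so that for $\tau$ near $c/2$ the core $G_\tau$ has at most $c$ vertices, whereupon summing the fewer than $\tau$ edges charged to each deleted vertex against the at most ${c \choose 2}$ edges of the core---and running the argument also at $\tau=3$---yields the two terms of the maximum.

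Now suppose that $G$ has a cut vertex; here I would induct on $n$, with the $2$-connected case and the small values $n=k$ as the base. If $G$ has a vertex of degree at most $1$, delete it: since $h(n,m,1)=h(n-1,m,1)+1$ and $h(n,m,s)=h(n-1,m,s)+s\ge h(n-1,m,s)+1$, the inductive bound is preserved. Otherwise $\delta(G)\ge2$, and I would peel off an end-block $B$ of $G$, with cut vertex $v$: then $B$ is $2$-connected (hence bounded by the previous paragraph, or by ${|V(B)| \choose 2}$ if $|V(B)|<k$), while $G':=G-(V(B)\setminus\{v\})$ is connected and $P_k$-free on fewer vertices. If $p$ and $p'$ denote the largest numbers of vertices on a path of $B$, resp.\ of $G'$, having $v$ as an endpoint, then $P_k$-freeness of $G$ forces $p+p'\le k$; combining a bound on $e(B)$ in terms of $p$ with the inductive hypothesis for $G'$ in a form sensitive to $p'$, and then optimising the resulting estimate over the block structure of $G$ at a cut vertex (subject to these rooted-path constraints and $\sum_i(|V(G_i)|-1)=n-1$ over the pieces $G_i$), would give $e(G)\le\max\{h(n,m,1),h(n,m,s)\}$, with equality only for $H(n,m,1)$ and then only in the range where $h(n,m,1)$ is the larger of the two terms.

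The characterisation of the extremal graphs then follows by tracking equality through both cases, together with a short convexity computation ruling out $H(n,m,2)$ as a strictly extremal graph. I expect the main obstacle to be the $2$-connected case: proving the Dirac--Bondy type lemma that a dense core forces a long cycle in $G$ even though the core itself need not be $2$-connected, and---the delicate point---calibrating the thresholds so that the two edge counts falling out of the disintegration are \emph{exactly} $h(n,m,2)$ and $h(n,m,s)$. This is where the parity of $k$ must be handled with care (and is the reason $s=\lfloor(k-2)/2\rfloor$, rather than $\lfloor(k-1)/2\rfloor$, is the correct value), and where the regime of $n$ close to $k$, in which $h(n,m,1)$ is the larger term, needs separate attention. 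The optimisation in the cut-vertex case, though lengthy, is comparatively routine once the $2$-connected case and a suitably strengthened (rooted) inductive hypothesis are in hand.
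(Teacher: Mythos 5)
This is one of the results the paper imports from \cite{Balister2008,Kopylov1977} without proof, so there is no in-paper argument to compare against; I can only assess your outline on its own terms. The route you choose is indeed the standard one (Kopylov's, also followed by Balister--Gy\H{o}ri--Lehel--Schelp), and the reductions you do carry out are correct: the two graphs $H(n,k-1,1)$ and $H(n,k-1,s)$ are $P_k$-free and connected, so only the upper bound is at issue; a $2$-connected $P_k$-free graph on $n\geq k$ vertices has no cycle on more than $k-2$ vertices (the fan-lemma extension of a $(k-1)$-cycle to a $P_k$ is right); the convexity of $a\mapsto h(n,m,a)={m-a \choose 2}+a(n-m+a)$ does let you absorb the $h(n,m,2)$ term into $\max\{h(n,m,1),h(n,m,s)\}$ for $s\geq 2$; and the identities $h(n,m,1)-h(n-1,m,1)=1$ and $h(n,m,s)-h(n-1,m,s)=s$ do make the degree-$\leq 1$ deletion step safe.

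The gap is that the two components carrying essentially all of the difficulty are named but not executed. First, Kopylov's circumference bound $e(G)\leq\max\{h(n,c+1,2),h(n,c+1,\lfloor c/2\rfloor)\}$ for $2$-connected graphs is invoked as a black box, and your sketch of its disintegration proof stops exactly where you say the delicate point is: proving the Dirac--Bondy-type lemma that a nontrivial core forces a long cycle, and calibrating the two thresholds so that the counts come out to exactly those two values of $h$ (this is where the parity bookkeeping and the regime $n$ close to $k$ live). Second, in the cut-vertex case the entire content is the optimisation ``over the block structure of $G$ at a cut vertex subject to the rooted-path constraints $p+p'\leq k$ and $\sum_i(|V(G_i)|-1)=n-1$,'' which you defer with ``would give''; formulating the strengthened rooted inductive hypothesis and verifying that the optimum of that constrained problem is $\max\{h(n,k-1,1),h(n,k-1,s)\}$ (with equality only for the stated graphs) is a genuine multi-page computation, not a routine corollary of what precedes it. The characterisation of the extremal graphs is likewise asserted rather than derived. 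As a roadmap your proposal is sound and matches the literature; as a proof it is incomplete at precisely the steps that make the theorem a theorem.
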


Let $n\geq k$ and $\ell=\lfloor(k-1)/2\rfloor$ be positive integers.
Define $$ar(n,k)=\max\{h(k,k-1,1)-1,h(n,k-1,\ell-1)-i\},$$ where $i=0$ if $k$ is odd and $i=1$ if $k$ is even.
Then Theorem~\ref{THM:main} states that if the number of colors in an edge-colored $K_n$ is at least ar$(n,k)+1$, then $K_n$ contains a rainbow copy of $P_k$.
The following lemma only needs Theorems~\ref{THM:path}, \ref{THM:connected-path} and some basic calculations. We move its proof to Appendix A.

\begin{lemma}\label{Lemma:L-is-nonconnected}
Let $k_1\geq k_2\geq 3$ and $t\geq 1$. Let $n_0\geq k_1-1$, $n_1\geq n_2\geq \ldots\geq n_t\geq 1$ and $n=\sum_{i=0}^{t}n_i\geq k_1+k_2-1\geq 5$. Then $$\emph{ex}_{\emph{con}}(n_0,P_{k_1})+\sum_{i=1}^{t}\emph{ex}_{\emph{con}}(n_i,P_{k_2})+t-1\leq ar(n,k_1+k_2-1).$$
\end{lemma}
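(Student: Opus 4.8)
The plan is to reduce the inequality to an explicit comparison of piecewise‑polynomial functions, by substituting the formulas of Theorems~\ref{THM:path} and~\ref{THM:connected-path} on the left‑hand side and the definition of $ar(n,k_1+k_2-1)$ on the right, and then to settle it by a finite case analysis. I would first record two facts about $\mathrm{ex}_{\mathrm{con}}$ that get used repeatedly: $\mathrm{ex}_{\mathrm{con}}(j,P_k)={j\choose2}$ for $1\le j\le k-1$, while $\mathrm{ex}_{\mathrm{con}}(j,P_k)=\max\{h(j,k-1,1),\,h(j,k-1,\lfloor(k-2)/2\rfloor)\}$ for $j\ge k$ by Theorem~\ref{THM:connected-path}; and, as a consequence, $j\mapsto\mathrm{ex}_{\mathrm{con}}(j,P_k)$ is convex on the integers $\ge k-1$.

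The first step absorbs the small components. Rewrite the left‑hand side as $\mathrm{ex}_{\mathrm{con}}(n_0,P_{k_1})+\sum_{i=1}^{t}\bigl(\mathrm{ex}_{\mathrm{con}}(n_i,P_{k_2})+1\bigr)-1$ and set $\rho:=\bigl({k_2-1\choose2}+1\bigr)/(k_2-1)$. I would prove $\bigl(\mathrm{ex}_{\mathrm{con}}(j,P_{k_2})+1\bigr)/j\le\rho$ for every $j\ge1$ (for $j\le k_2-1$ this is the elementary monotonicity of $({j\choose2}+1)/j$; for $j\ge k_2$ it follows from the formula for $h$), so that $\sum_{i=1}^{t}\bigl(\mathrm{ex}_{\mathrm{con}}(n_i,P_{k_2})+1\bigr)\le\rho(n-n_0)$ and the left‑hand side is at most $f(n_0):=\mathrm{ex}_{\mathrm{con}}(n_0,P_{k_1})+\rho(n-n_0)-1$. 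Since $f$ is convex in $n_0$ on the interval $[k_1-1,\,n-1]$, it attains its maximum at an endpoint, so it suffices to bound $f(k_1-1)$ and $f(n-1)$; at $n_0=n-1$ one may in fact replace $f(n-1)$ by the exact left‑hand value $\mathrm{ex}_{\mathrm{con}}(n-1,P_{k_1})$, since the one remaining vertex contributes nothing.

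It remains to verify the two inequalities ${k_1-1\choose2}+\rho(n-k_1+1)-1\le ar(n,k_1+k_2-1)$ and $\mathrm{ex}_{\mathrm{con}}(n-1,P_{k_1})+\rho-1\le ar(n,k_1+k_2-1)$. Here I would split into cases according to the parities of $k_1$ and $k_2$ (these fix $\lfloor(k_1-2)/2\rfloor$, $\lfloor(k_2-2)/2\rfloor$, $\lfloor(k_1+k_2-2)/2\rfloor$ and the constant $\epsilon\in\{0,1\}$ subtracted in $ar$), according to which of the two terms in $\mathrm{ex}_{\mathrm{con}}(n-1,P_{k_1})$ dominates, and according to which of the two terms in $ar$ dominates — the constant term ${k_1+k_2-3\choose2}+1$ controls the bounded range of $n$, the term $h(n,k_1+k_2-2,\lfloor(k_1+k_2-2)/2\rfloor-1)$ the large range. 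In every case the claim reduces to a one‑variable comparison of linear or quadratic expressions; the decisive facts are $\lfloor(k_1+k_2-2)/2\rfloor-1\ge(k_1-2)/2$ and $\lfloor(k_1+k_2-2)/2\rfloor-1\ge(k_2-2)/2$, which make the $\Theta(n)$ terms favor the right‑hand side, while the explicit negative constants inside the $h$'s on the left, together with the subtraction of at most $1$ on the right, take care of the constant terms.

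The real difficulty, and the main obstacle, is the bookkeeping rather than any individual estimate: proving cleanly that $\bigl(\mathrm{ex}_{\mathrm{con}}(j,P_{k_2})+1\bigr)/j$ is maximized at $j=k_2-1$ (this is where the special role of the cliques $K_{k_2-1}$ in the worst configuration enters), verifying the convexity of $\mathrm{ex}_{\mathrm{con}}(\cdot,P_{k_1})$ including across the junction $j=k_1-1$, and then carrying the $O(1)$ additive constants correctly through all the parity$\,\times\,$dominant‑term$\,\times\,$endpoint cases, since the final margin is only $O(1)$ when $k_2=3$ or when $n$ is close to $k_1+k_2-1$. The genuinely degenerate instances — $k_2=3$, where a connected $P_3$‑free graph has at most one edge so the statement is essentially linear and nearly tight, and the small values $k_1,k_2\le5$ where the $\mathrm{ex}_{\mathrm{con}}$ formulas behave specially — I would dispatch by direct enumeration.
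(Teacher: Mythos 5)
Your strategy is sound and it reaches the same intermediate object as the paper --- a bound of the form $\mathrm{ex}_{\mathrm{con}}(n_0,P_{k_1})+(\text{linear in }n-n_0)$ followed by a finite comparison against $ar(n,k_1+k_2-1)$ --- but by a genuinely different route at both reduction steps. To absorb the small components the paper proves a merging claim ($\sum_i \mathrm{ex}(m_i,P_{k_2})+t-1\le \mathrm{ex}(m,P_{k_2})+s'$, obtained by repeatedly fusing the two smallest parts via ${m_{t-1}\choose 2}+{m_t\choose 2}+1\le\mathrm{ex}(m_{t-1}+m_t,P_{k_2})$), which yields $\mathrm{ex}(n-n_0,P_{k_2})+s'$ with $s'=\lceil (n-n_0)/(k_2-1)\rceil-1$; your per-vertex density $\rho=({k_2-1\choose 2}+1)/(k_2-1)$ gives $\rho(n-n_0)$, which coincides with the paper's bound exactly when $k_2-1$ divides $n-n_0$ (the tight all-$K_{k_2-1}$ configuration) and is weaker by less than $1$ otherwise, so it suffices; your verification that $({j\choose2}+1)/j$ and $(h(j,k_2-1,a)+1)/j$ are all at most $\rho$ does check out. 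For the dependence on $n_0$ the paper runs three subcases according to which of ${k_1-1\choose2}$, $h(n_0,k_1,1)$, $h(n_0,k_1,s_1)$ realizes $\mathrm{ex}_{\mathrm{con}}(n_0,P_{k_1})$ (times two ranges of $n$ times parity, twelve subcases in all), whereas you invoke convexity of $n_0\mapsto\mathrm{ex}_{\mathrm{con}}(n_0,P_{k_1})$ on $[k_1-1,n-1]$ to reduce to the two endpoints; this convexity is true but deserves an explicit check at the junction, since the function actually \emph{decreases} from $n_0=k_1-1$ to $n_0=k_1$ (e.g.\ $\mathrm{ex}_{\mathrm{con}}(5,P_6)=10>9=\mathrm{ex}_{\mathrm{con}}(6,P_6)$) --- the increments are still non-decreasing, which is all convexity needs, and it buys you noticeably fewer terminal inequalities. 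One small slip: in the convexity argument you may not replace $f(n-1)$ by the exact left-hand value $\mathrm{ex}_{\mathrm{con}}(n-1,P_{k_1})$, because the endpoint that must be bounded is $f(n-1)=\mathrm{ex}_{\mathrm{con}}(n-1,P_{k_1})+\rho-1$ itself with $\rho-1\ge0$; since the inequality you actually state and propose to verify includes the $+\rho-1$, this does not damage the argument. Both your plan and the paper's proof ultimately rest on a block of routine but tedious comparisons of quadratics in the two regimes $n\le \frac{5\ell+O(1)}{2}$ (where $ar=h(k,k-1,1)-1$ dominates) and $n$ large (where $h(n,k-1,\ell-1)-i$ dominates), which neither writes out in full; your deferral is of comparable scope to the paper's ``detailed calculation'' steps.
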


Very recently, F\"{u}redi, Kostochka, Luo and Verstra\"{e}te \cite{Furedi2016,Furedi2018} considered the stability results of the well-known Erd\H{o}s-Gallai theorems on cycles and paths. Let $\mathcal{C}_{k}$ be the set of cycles of length at least $k$. Let $G$ be an $n$-vertex connected  $P_k$-free ($n$-vertex 2-connected  $\mathcal{C}_k$-free) graph. Their results  state that if $e(G)$ is close to the maximum value of number of edges of $n$-vertex connected  $P_k$-free (2-connected  $\mathcal{C}_k$-free) graphs, then $G$ must be a subgraph of some well-specified graphs. We will use the following two corollaries of the main theorems in \cite{Furedi2016,Furedi2018}  (Theorem 1.6 in \cite{Furedi2016} and Theorem 2.3\footnote{ Theorem 2.3 in \cite{Furedi2018} states the stability result for 2-connected $\mathcal{C}_k$-free graphs. Note that if we add new a vertex and join it to all vertices of a connected $P_k$-free graph, then the obtained graph is 2-connected $\mathcal{C}_k$-free. Theorem 2.3 in \cite{Furedi2018} can be extended to stability result for connected $P_k$-free graphs as Theorem 1.6 was extended by Theorem 1.4 in \cite{Furedi2016}.} in \cite{Furedi2018}, also see results in \cite{MN2019,MY20+}). We divide their results basing on the parity of $k$ for the purpose of proving our main result.

\begin{corollary}[F\"{u}redi,   Kostochka, Luo and Verstra\"{e}te \cite{Furedi2016,Furedi2018}]\label{Coro:1}
Let $k\geq 9$ be odd and $\ell=(k-1)/2$. Let $G$ be an $n$-vertex connected graph without containing a path on $k$ vertices.
Then $e(G)\leq\max\{h(n,k-1,2),h(n,k-1,\ell-1)\}$ unless $G$ is a subgraph of $H(n,k-1,1)$.
\end{corollary}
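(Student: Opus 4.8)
The plan is to deduce Corollary~\ref{Coro:1} from the $2$-connected circumference stability theorem of F\"{u}redi, Kostochka, Luo and Verstra\"{e}te by the device indicated in the footnote---passing to the graph obtained by adding one dominating vertex---and then translating the conclusion back. Two elementary facts make the translation exact. First, with $h(n,k,a)={k-a \choose 2}+a(n-k+a)$, a short computation gives
\begin{equation*}
h(n,k-1,a)+n=h(n+1,k+1,a+1)
\end{equation*}
for every admissible $a$. Second, deleting a vertex of the part $A$ from $H(N,m,a)$ yields $H(N-1,m-2,a-1)$; in particular, a dominating vertex of $H(n+1,k+1,a+1)$ necessarily lies in $A$, and removing it leaves $H(n,k-1,a)$.

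Concretely: let $G$ be an $n$-vertex connected $P_k$-free graph with $e(G)>\max\{h(n,k-1,2),h(n,k-1,\ell-1)\}$, and form $G'$ from $G$ by adding a new vertex $v$ joined to every vertex of $G$. Then $G'$ is $2$-connected and has no cycle of length at least $k+1$: such a cycle avoiding $v$ would give a path on at least $k$ vertices of $G$, and one through $v$ (delete $v$) likewise would, both impossible. Also $e(G')=e(G)+n$, so by the first identity $e(G')>\max\{h(n+1,k+1,3),h(n+1,k+1,\ell)\}$; since $k$ is odd, $\ell=(k-1)/2=\lfloor k/2\rfloor$ is precisely the index occurring in the circumference extremal number for $2$-connected $\mathcal{C}_{k+1}$-free graphs, so $e(G')$ is strictly above the stability threshold for that class. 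The stability theorem of \cite{Furedi2018} (extended to connected $P_k$-free graphs as in \cite{Furedi2016}), applied with parameter $k+1$, then forces $G'\subseteq H(n+1,k+1,2)$: any other host graph it permits has at most $\max\{h(n+1,k+1,3),h(n+1,k+1,\ell)\}<e(G')$ edges and so cannot contain $G'$. Finally $\deg_{G'}(v)=n$ forces $v$ to have full degree in $H(n+1,k+1,2)$, and since $n\ge k$ only vertices of the part $A$ have full degree there; hence $v\in A$, and deleting $v$ gives $G\subseteq H(n+1,k+1,2)\setminus\{v\}=H(n,k-1,1)$, as claimed.

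The step I expect to demand the most care is matching the cited statements to what is used here: pinning down exactly which quantities sit in the threshold of the $\mathcal{C}_{k+1}$-free stability theorem (the parameter $k+1$ is even when $k$ is odd, which is the point of separating the two parities) and checking that after the reduction these become exactly $h(n,k-1,2)$ and $h(n,k-1,\ell-1)$, with $H(n,k-1,1)$ the only exceptional graph surviving. The hypothesis $k\ge 9$ is what keeps the indices $2,3,\ell-1,\ell$ in their admissible ranges and mutually distinct and keeps us in the regime where \cite{Furedi2016,Furedi2018} apply; the condition $n\ge k$ is inherited from Theorem~\ref{THM:main}. The remaining ingredients---the $h$-arithmetic and the degree count locating $v$ in the part $A$---are routine, of the same flavour as the computations underlying Lemma~\ref{Lemma:L-is-nonconnected}.
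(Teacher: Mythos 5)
Your proposal is correct and is essentially the paper's own derivation: the paper establishes Corollary~\ref{Coro:1} only by citation, with a footnote indicating precisely your reduction (add a dominating vertex to pass from connected $P_k$-free graphs to $2$-connected graphs with no cycle of length at least $k+1$, apply the circumference stability theorem of F\"{u}redi, Kostochka, Luo and Verstra\"{e}te, and translate back via the identity $h(n,k-1,a)+n=h(n+1,k+1,a+1)$ together with the observation that the dominating vertex must sit in the part $A$). The one point that genuinely needs checking---and which you flag yourself---is that for the even circumference bound $k+1$ the cited theorem's exceptional classes are not all single host graphs (they include families such as ``$G'-A$ is a star forest for some small $A$''), so absorbing them into the bound $\max\{h(n+1,k+1,3),h(n+1,k+1,\ell)\}$ requires a case-by-case edge count rather than a one-line ``too few edges'' remark; this bookkeeping is exactly what the paper leaves implicit as well.
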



\begin{corollary}[F\"{u}redi,   Kostochka, Luo and Verstra\"{e}te \cite{Furedi2016,Furedi2018}]\label{Coro:2}
Let $k\geq 6$ be even and $\ell=\lfloor(k-1)/2\rfloor$. Let $G$ be an $n$-vertex connected graph without containing path on $k$ vertices.
Then $e(G)<\max\{h(n,k-1,2)+1,h(n,k-1,\ell-1)\}$ unless\\
(a) $G$ is a subgraph of $H(n,k-1,1)$, or\\
(b) $G$ is a subgraph of $H(n,k-1,\ell)$, or\\
(c) $G$ is an acyclic\footnote{We say a graph is {\it acyclic} if it is connected without containing a cycle.} $P_6$-free graph  when $k=6$, or \\
(d) $G=H(n,k-1,\ell-1)$.
\end{corollary}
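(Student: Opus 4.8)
\medskip
\noindent\textbf{Proof sketch.}
The plan is to deduce Corollaries~\ref{Coro:1} and~\ref{Coro:2} from the $2$-connected stability theorem for long cycles of F\"{u}redi, Kostochka, Luo and Verstra\"{e}te (Theorem~2.3 of~\cite{Furedi2018}, together with Theorem~1.6 of~\cite{Furedi2016}) by the apex reduction indicated in the footnote. Given an $n$-vertex connected $P_k$-free graph $G$, let $G^{+}$ be the graph on $n+1$ vertices obtained from $G$ by adding one new vertex $u$ joined to every vertex of $G$. First I would record the two properties this reduction rests on: (i) $G^{+}$ is $2$-connected, since deleting $u$ leaves the connected graph $G$, and deleting any other vertex leaves a graph in which $u$ is still adjacent to everything; and (ii) $G^{+}$ has no cycle of length at least $k+1$, because a cycle of length $m\geq k+1$ avoiding $u$ already gives a $P_m\supseteq P_k$ in $G$, while one through $u$ becomes, after deleting $u$, a path on $m-1\geq k$ vertices of $G$. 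Thus $G^{+}$ is a $2$-connected graph with no cycle of length at least $k+1$ and with $e(G^{+})=e(G)+n$, and --- crucially, since the relevant theorems of~\cite{Furedi2016,Furedi2018} hold for an arbitrary number of vertices --- the stability result applies to $G^{+}$ with no ``$n$ large'' restriction. (Matching the cycle length $k+1$ here with the normalization in~\cite{Furedi2018} involves only the usual index shift.)

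Next I would push the extremal configurations through this correspondence. A short computation with $h(n,k,a)={k-a \choose 2}+a(n-k+a)$ shows that coning $H(n,k-1,a)$ produces exactly $H(n+1,k+1,a+1)$ (the apex joins the dominating set $A$), so $h(n+1,k+1,a+1)=h(n,k-1,a)+n$. Hence, after adding $n$, the edge thresholds appearing in Corollaries~\ref{Coro:1} and~\ref{Coro:2} are exactly the thresholds in the $2$-connected statement for graphs on $n+1$ vertices with no cycle of length at least $k+1$. So if $e(G)$ reaches the relevant threshold, then $e(G^{+})$ reaches the $2$-connected threshold, and the stability theorem forces $G^{+}$ to be a subgraph of one of finitely many near-extremal graphs $H_i$. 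Since $u$ has degree $n$ in $G^{+}$ and $G^{+}\subseteq H_i$ on the same vertex set, $u$ is a dominating vertex of $H_i$ as well; relabelling $u$ into the dominating part and reading the identity above backwards, $G=G^{+}-u$ is a subgraph of $H_i-u$, and $H(n+1,k+1,a)-u=H(n,k-1,a-1)$. Translating the list of the $H_i$'s back through this map yields the allowed configurations: $H(n,k-1,1)$ in the odd case, and $H(n,k-1,1)$, $H(n,k-1,\ell)$, $H(n,k-1,\ell-1)$ together with a degenerate family in the even case.

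It then remains to order the near-extremal values correctly and to pin down the exceptions. The map $a\mapsto h(n,k-1,a)$ has constant second difference equal to $3$, so on the range $1\leq a\leq\lfloor(k-2)/2\rfloor$ --- the range on which $H(n,k-1,a)$ is genuinely $P_k$-free --- it is convex, attains its maximum at one of the endpoints $a=1$ and $a=\lfloor(k-2)/2\rfloor$ (the latter being $\ell-1$ when $k$ is odd and $\ell$ when $k$ is even), and every intermediate value lies strictly below $\max\{h(n,k-1,2),h(n,k-1,\ell-1)\}$; this elementary computation is what makes the thresholds in the corollaries the correct ones and shows that, above the threshold, only $H(n,k-1,1)$ can occur in the odd case. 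In the even case $H(n,k-1,\ell)$ and $H(n,k-1,\ell-1)$ have almost equal edge counts (they differ by a bounded amount) and the latter is \emph{not} a subgraph of the former, which forces both alternatives (b) and (d); the degenerate alternative (c) --- acyclic $P_6$-free graphs when $k=6$ --- comes from the boundary instance $\ell-1=1$ of the stability theorem, where the clique part $A\cup C$ collapses and $H_i-u$ becomes a tree (the same boundary phenomenon is why the two corollaries are stated only for $k\geq 9$, resp.\ $k\geq 6$). Finally, that the ``unless'' alternatives are actually realised is immediate from the definition of $H(n,k,a)$ and Theorem~\ref{THM:connected-path}: the graphs $H(n,k-1,1)$, $H(n,k-1,\ell)$, $H(n,k-1,\ell-1)$ and the relevant trees are connected and $P_k$-free.

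I expect the main obstacle to be bookkeeping rather than a single hard step: one has to align the notation $H(n,k,a)$ used here with the normalization of the cited $2$-connected stability theorem, check that the apex reduction neither drops a near-extremal graph nor introduces a spurious one, and handle the small even cases --- above all $k=6$, where the two extremal families nearly coincide --- together with the exact-equality alternative (d). All of this is routine but delicate, which is presumably why the corollaries are stated with an explicit case-split list rather than quoted verbatim.
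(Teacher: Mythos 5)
Your overall route is the same one the paper intends: the paper does not write out a proof of Corollary~\ref{Coro:2} at all, but derives it from Theorem~2.3 of \cite{Furedi2018} (and Theorem~5.1(4) of \cite{Furedi2016} for $k=6$) via exactly the apex reduction described in its footnote, and your cone identity $h(n+1,k+1,a+1)=h(n,k-1,a)+n$, the verification that $G^{+}$ is $2$-connected with no long cycle, and the convexity of $a\mapsto h(n,k-1,a)$ are the correct bookkeeping for that reduction.

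There is, however, one genuine gap: alternative $(d)$ does \emph{not} come out of the cited stability theorems, and the paper says so explicitly in the remark following the corollary (``Corollary~\ref{Coro:2}$(d)$ is not proved in \cite{Furedi2016,Furedi2018}''). The theorems of F\"{u}redi--Kostochka--Luo--Verstra\"{e}te conclude that either $e(G)\leq\max\{h(n,k-1,2),h(n,k-1,\ell-1)\}$ or $G$ lies inside one of the listed host graphs; since $H(n,k-1,\ell-1)$ itself satisfies this \emph{non-strict} bound with equality, those theorems are silent about which graphs attain $e(G)=h(n,k-1,\ell-1)$. Corollary~\ref{Coro:2} needs the strict bound $e(G)<\max\{h(n,k-1,2)+1,h(n,k-1,\ell-1)\}$, so one must additionally show that a connected $P_k$-free graph with exactly $h(n,k-1,\ell-1)$ edges that is not covered by $(a)$--$(c)$ must \emph{equal} $H(n,k-1,\ell-1)$; this is a separate (short but real) argument, for which the paper defers to \cite{MY20+}. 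Your sketch folds this into ``translating the list of the $H_i$'s back through the map'' and later calls it routine bookkeeping, but as written the step would fail: no list produced by the cited theorems contains $H(n,k-1,\ell-1)$ as an exceptional configuration. The remaining points you flag (normalization of the cycle length, the $k=6$ degenerate family, realizability of the alternatives via Theorem~\ref{THM:connected-path}) are indeed only index-shifting and match the paper's footnote and remark.
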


\remark For $k=6$ in Corollary~\ref{Coro:2}, see Theorem 5.1(4) in \cite{Furedi2016}.
Actually, the result we used here is extended  by Theorem 5.1(4) in \cite{Furedi2016} as Theorem 1.6 was extended by Theorem 1.4 in \cite{Furedi2016}.
Corollary~\ref{Coro:2}$(d)$ ($e(G)=h(n,k-1,\ell-1)$) is not proved in \cite{Furedi2016,Furedi2018}.
We can prove this with a little more effort.
We refer the readers to  \cite{MY20+} for a short proof of stability results of the Erd\H{o}s-Gallai theorems from which one can easily get Corollary~\ref{Coro:2}$(d)$.

\medskip

We also need the following simple lemma\footnote{The lemma maybe appears in some old paper, but I do not find it.} (see Lemma 2.2 in \cite{Yuan2017}).
\begin{lemma}\label{P2l}
Let $G$ be a bipartite graph with classes $A$ and $B$. Let $|A|=|B|=\ell$.
If $e(G)\geq (\ell-1)\ell+2$, then $G$ contains a cycle of length $2\ell$.
\end{lemma}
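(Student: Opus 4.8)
The plan is to derive Lemma~\ref{P2l} from a classical sufficient condition for Hamiltonicity of balanced bipartite graphs, the Moon--Moser theorem: if $G$ is a bipartite graph with parts $X,Y$ of size $\ell\ge 2$ and $d_G(x)+d_G(y)\ge\ell+1$ for every pair $x\in X$, $y\in Y$ with $xy\notin E(G)$, then $G$ has a Hamilton cycle. A cycle of length $2\ell$ in our graph $G$ is exactly a Hamilton cycle of the bipartite graph on $A\cup B$, so it suffices to check that the hypothesis of the Moon--Moser theorem is forced by the edge bound $e(G)\ge(\ell-1)\ell+2$.

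This check is immediate. Fix $x\in A$ and $y\in B$ with $xy\notin E(G)$. Since $x$ and $y$ are non-adjacent, every edge incident with $x$ has its other endpoint in $B\setminus\{y\}$ and every edge incident with $y$ has its other endpoint in $A\setminus\{x\}$; in particular these two families of edges are disjoint and together number $d_G(x)+d_G(y)$. All remaining edges of $G$ lie in the complete bipartite graph on $(A\setminus\{x\},\,B\setminus\{y\})$, so there are at most $(\ell-1)^2$ of them. Hence $(\ell-1)\ell+2\le e(G)\le d_G(x)+d_G(y)+(\ell-1)^2$, which rearranges to $d_G(x)+d_G(y)\ge(\ell-1)\ell+2-(\ell-1)^2=\ell+1$, as required. (When $\ell=2$ the statement is degenerate: $e(G)\ge 4$ forces $G=K_{2,2}=C_4$.)

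If one prefers to avoid quoting Moon--Moser, the same lemma follows from its textbook proof: among all supergraphs of $G$ on the same vertex set that contain no $2\ell$-cycle, pick an edge-maximal one $G'$ (which inherits the degree condition above, since adding edges only raises degrees and shrinks the set of non-edges); because $K_{\ell,\ell}$ has a $2\ell$-cycle, $G'$ has a non-edge $xy$ with $x\in A$, $y\in B$, and by maximality $G'+xy$ contains a $2\ell$-cycle through $xy$, hence $G'$ has a Hamilton path $p_1p_2\cdots p_{2\ell}$ with $p_1=x$ and $p_{2\ell}=y$. If $p_1p_{2j}\in E(G')$ and $p_{2\ell}p_{2j-1}\in E(G')$ for some $j$, then $p_1p_2\cdots p_{2j-1}p_{2\ell}p_{2\ell-1}\cdots p_{2j}p_1$ is a $2\ell$-cycle, a contradiction; the absence of such $j$ lets one match each neighbour of $x$ in $B$ with a non-neighbour of $y$ in $A$ and conclude $d_{G'}(x)+d_{G'}(y)\le\ell$, contradicting the degree condition. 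I do not anticipate a genuine obstacle: the content is entirely elementary once the right classical tool is identified, and the only point needing care is that the disjointness of the edge families at $x$ and at $y$ in the count uses the non-adjacency of $x$ and $y$. The bound is also best possible, since a bipartite graph with a vertex of degree $1$ can have as many as $(\ell-1)\ell+1$ edges while containing no $2\ell$-cycle.
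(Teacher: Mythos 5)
Your proof is correct. Note that the paper itself does not prove Lemma~\ref{P2l}: it is only quoted, with a pointer to Lemma~2.2 of \cite{Yuan2017}, so there is no in-paper argument to compare against. Your route is a clean way to make the statement self-contained: the counting step is right, since for a non-adjacent pair $x\in A$, $y\in B$ the edges at $x$ and the edges at $y$ are disjoint families and everything else lies in the $(\ell-1)\times(\ell-1)$ complete bipartite graph, giving $d(x)+d(y)\ge (\ell-1)\ell+2-(\ell-1)^2=\ell+1$, which is exactly the Moon--Moser threshold for a Hamilton cycle (i.e.\ a $C_{2\ell}$) in a balanced bipartite graph. The fallback argument you sketch (edge-maximal counterexample, Hamilton $x$--$y$ path, and the Ore-type pairing of positions $2j$ with $2j-1$ to force $d(x)+d(y)\le\ell$) is the standard proof of Moon--Moser and is also correct, so the lemma does not even need an external citation. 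Your remark on sharpness (a degree-one vertex allows $(\ell-1)\ell+1$ edges with no spanning cycle) confirms the constant $+2$ in the hypothesis is exactly what is needed. The only cosmetic caveat is the degenerate case $\ell\le 1$, where the hypothesis is vacuous, and $\ell=2$, which you already handle.
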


\section{Proof of Theorem~\ref{THM:main}}

The {\it representing graph} of a graph $G$ with an edge coloring $c$ is a spanning subgraph of $G$ obtained by taking one edge of each color of $c$.
For a set of edges $E$ of $G$, we use $c(E)$ to denote the colors of edges in $E$.
For a set of colors $\mathcal{C}$, when an edge $e$ is colored by a color in $\mathcal{C}$, we say $e$ is colored by $\mathcal{C}$ for short.
Given a graph $G$, we use $T(G)$ to denote the set of its cut edges.

\begin{definition}
\emph{Given a graph $G$ with an edge coloring $c$, we say that the pair $(G,c)$ is a {\it good edge coloring} if there is a connected representing graph $L_n$ of $G$ with a non-empty set of cut edges $X\subseteq T(L_n)$ such that each  $e\in E(G)$ between components of $L_n-X$ are colored by $c(X)$.}
\end{definition}

\begin{lemma}\label{Lem:good coloring}
Let $G$ be a graph with an edge coloring $c$ and let $\mathcal{L}_n$ be the set of connected representing graphs of $c$.
If   $\mathcal{C}_0=\bigcap_{L_n \in \mathcal{L}_n}\{c(e):e\in T(L_n)\}$ is not empty, then $(G,c)$ is a good edge coloring with a representing graph $L^\ast_n$ and a set of cut edges $X\subseteq T(L^\ast_n)$ such that $\mathcal{C}_0\subseteq c(X)$.
\end{lemma}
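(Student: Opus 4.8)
The plan is to build the pair $(L^\ast_n,X)$ by an extremal choice of the representing graph, using two tools. The first is a \emph{rerouting} move: if $L_n\in\mathcal{L}_n$ and $f\in E(G)$ has $c(f)\notin c(T(L_n))$, and $g$ is the unique edge of $L_n$ with $c(g)=c(f)$, then $g$ is not a cut edge of $L_n$, so $L_n-g$ is connected and $L_n-g+f$ is again a connected representing graph, i.e.\ $L_n-g+f\in\mathcal{L}_n$. The second is the following observation about $\mathcal{C}_0$: if $L_n\in\mathcal{L}_n$, $X\subseteq T(L_n)$, and $f\in E(G)$ joins two distinct components of $L_n-X$ with $c(f)\notin c(T(L_n))$, then no cut edge of $L_n$ separating the two endpoints of $f$ can have a color in $\mathcal{C}_0$. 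Indeed, for such an edge $b$, in $L'_n:=L_n-g+f$ one checks (using that deleting the non-cut edge $g$ does not disconnect either side of $b$) that $f$ creates a cycle through $b$, so $b\notin T(L'_n)$; but $b$ is still the only edge of $L'_n$ of its color, so $c(b)\in\mathcal{C}_0\subseteq c(T(L'_n))$ would force $b\in T(L'_n)$, a contradiction. Taking $X=\{b\}$ in particular: if $b$ is a cut edge of $L_n$ with $c(b)\in\mathcal{C}_0$, then every $G$-edge between the two components of $L_n-b$ has its color in $c(T(L_n))$.

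Next I would choose $L^\ast_n\in\mathcal{L}_n$ minimising the number of cut-colors $|c(T(L^\ast_n))|$ (equivalently, minimising the number of cut edges, since $c$ is injective on $E(L^\ast_n)$), and set $X:=T(L^\ast_n)$. Then $X\neq\emptyset$ because $\mathcal{C}_0\neq\emptyset$, and $\mathcal{C}_0\subseteq c(T(L^\ast_n))=c(X)$. The key point will be that this minimality forces $c(T(L^\ast_n))=\mathcal{C}_0$. Granting that, suppose some $e\in E(G)$ joined two distinct components of $L^\ast_n-X$; since $L^\ast_n$ is connected, some cut edge of $L^\ast_n$ separates the endpoints of $e$, while $c(e)\notin c(X)=c(T(L^\ast_n))$; by the observation above that cut edge would have a color outside $\mathcal{C}_0=c(T(L^\ast_n))$, which is absurd. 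Hence no such $e$ exists, so $(L^\ast_n,X)$ witnesses that $(G,c)$ is a good edge coloring, with $\mathcal{C}_0\subseteq c(X)$, as required.

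The hard part will be establishing that the minimum number of cut-colors is exactly $|\mathcal{C}_0|$, i.e.\ that if $L_n\in\mathcal{L}_n$ has a cut edge of some color $\gamma\notin\mathcal{C}_0$, then there is $L'_n\in\mathcal{L}_n$ with strictly fewer cut-colors. Since $\gamma\notin\mathcal{C}_0$, there is $M\in\mathcal{L}_n$ in which $\gamma$ is not a cut-color, and the idea is to transport this feature to $L_n$ along a shortest sequence of reroutings towards $M$. The genuine obstacle is that a single rerouting can create new cut edges inside a $2$-edge-connected block of $L_n$ (for instance when that block is a cycle), so one exchange need not lower the count; the resolution is to look at the first step of the sequence at which $\gamma$ ceases to be a cut-color, apply the observation to the edge inserted at that step, and exploit the fact that any newly created cut edges all carry colors coming from a single former $2$-edge-connected block --- hence colors that were not cut-colors before --- so that a careful bookkeeping of which colors enter and leave $c(T(\cdot))$ shows the count strictly drops. (Alternatively, one can construct $(L^\ast_n,X)$ iteratively: maintain a pair $(L_n,X)$ with $\mathcal{C}_0\subseteq c(X)\subseteq c(T(L_n))$, and while a bad edge exists, either adjoin to $X$ the cut-edge representative of its color when that color lies in $c(T(L_n))$, or otherwise reroute on it and discard from $X$ the edges that stop being cut edges; proving this process terminates is then the crux, and would need a monotone potential.)
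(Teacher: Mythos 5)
Your two preliminary tools are sound: the rerouting move is valid, and the observation that a cut edge of $L_n$ separating the endpoints of an edge $f$ with $c(f)\notin c(T(L_n))$ cannot carry a color of $\mathcal{C}_0$ is correct and genuinely useful. The final deduction would also go through \emph{if} the claim you defer to ``the hard part'' were true. But that claim --- that some $L_n\in\mathcal{L}_n$ has $c(T(L_n))=\mathcal{C}_0$, equivalently that a representing graph with a cut edge colored outside $\mathcal{C}_0$ can always be traded for one with strictly fewer cut-colors --- is false, so no amount of bookkeeping along a rerouting sequence will establish it. Counterexample: let $V(G)=\{1,2,3,4,5\}$ and let $c$ give color $\alpha$ to the edges $12$ and $34$, color $\beta$ to $13$ and $24$, color $\gamma$ to $14$, color $\delta$ to $23$, and color $\gamma_0$ to $45$, with no other edges. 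Every representing graph consists of $14$, $23$, $45$ together with one edge from $\{12,34\}$ and one from $\{13,24\}$; all four choices are connected, and their cut-edge sets are respectively $\{14,45\}$, $\{23,45\}$, $\{23,45\}$, $\{14,45\}$. Hence $\mathcal{C}_0=\{\gamma_0\}$ is nonempty, yet every $L_n\in\mathcal{L}_n$ has exactly two cut-colors, so $\min|c(T(L_n))|=2>1=|\mathcal{C}_0|$. Moreover your choice $X=T(L^\ast_n)$ fails outright here: for $L^\ast_n$ containing $12$ and $13$ we get $X=\{14,45\}$, and the edges $24$ and $34$ join distinct components of $L^\ast_n-X$ but are colored $\beta$ and $\alpha$, neither of which lies in $c(X)=\{\gamma,\gamma_0\}$.

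The lemma itself survives in this example --- take $X=\{45\}$ --- which points to the repair: $X$ must be allowed to be a \emph{proper subset} of $T(L^\ast_n)$, not all of it. This is what the paper does: starting from the edges colored by $\mathcal{C}_0$, it deletes them, collects the colors $\mathcal{C}_1$ of $G$-edges joining distinct components of what remains, deletes the edges of those colors too, and iterates until the process stabilizes. Termination is automatic since edges are only ever deleted; the substantive point, proved by a swapping argument very much in the spirit of your rerouting and your observation, is that every edge deleted during this closure is a cut edge of the original representing graph, so that the resulting $X$ satisfies $X\subseteq T(L^\ast_n)$ and $\mathcal{C}_0\subseteq c(X)$ while, by construction, all crossing edges are colored by $c(X)$. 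Your parenthetical alternative is essentially this scheme, but as you acknowledge, you have not supplied the termination/correctness argument --- and that argument is the actual content of the lemma, so as written the proof is incomplete and its main intermediate claim is refuted by the example above.
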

\begin{proof}
Let $L^0_n\in \mathcal{L}_n$ be a representing graph of $G$.
Since $\mathcal{C}_0=\bigcap_{L_n \in \mathcal{L}_n}\{c(e):e\in T(L_n)\}$ is not empty, $L^0_n$ is connected with at least one cut edge.
We obtain a subgraph of $L^0_n$ by the following procedure.
Delete the edges of $L_n$ colored by $\mathcal{C}_0$ and denote the obtained graph by $L^1_n$.
Let $\mathcal{C}_1$ be the colors of the edges of $G$ between any two components of $L^1_n$.
Delete the edges of $L_n$ colored by $\mathcal{C}_1$ and denote the obtained graph by $L^2_n$.
We go on this procedure and finally obtain a minimal spanning subgraph $L^t_n$ of $L_n$ for some integer $t$.

It is enough to show that the edges of $G$ between any two components of $L^t_n$ are colored by $c(X)$ with $X \subseteq T(L^0_n)$.
We will show that $L_n^i$ is obtained from $L_n^0$ by deleting edges of $T(L^0_n)$   for $i=1,\ldots,t$.
This will complete our proof of the lemma.
Suppose for contrary that there is an edge $e^\prime_{\ell_0}\in E(G)$ between two components of $L^{\ell_0}_n$ not colored by $c(T(L^0_n))$.
We choose $\ell_0$ as small as possible.
Clearly, we have $\ell_0>0$ and each $L_n^i$ for $i<\ell_0$ is obtained from $L_n^0$ by deleting edges from $T(L^0_n)$.
Then we may add $e_{\ell_0}^\prime$ to $L_n^0$ and delete the edge $e_{\ell_0}$ of $L^0_n$ colored by $c(e_{\ell_0}^\prime)$.
Since $e_{\ell_0}$ is not a cut edge of $L_n^0$, the obtained graph $\widetilde{L}^1_n\in\mathcal{L}_n$ is connected.
Moreover, by the minimality of $\ell_0$, $\widetilde{L}^1_n$ contains a cycle containing an edge $e_{\ell_1}$ of $T(L_n)$ colored by $\mathcal{C}_{\ell_1}$  with $0\leq \ell_1\leq \ell_0-1$.
We choose $\ell_1$ as small as possible.
If $\ell_1=0$, then $e_{\ell_1}$ is colored by $\mathcal{C}_0$ and $e_{\ell_1}$ is not a cut edge of $\widetilde{L}^1_n$.
This is a contradiction to definition of $\mathcal{C}_0$.
Let $\ell_1\geq 1$.
Then, from $\widetilde{L}_n^1$, after deleting the edge $e_{\ell_1}$ and adding an edge $e^\prime_{\ell_1}$ colored by $c(e_{\ell_1})$ between the components of $L_n^{\ell_1}$, the obtained graph $\widetilde{L}^2_n\in\mathcal{L}_n$ is connected.
Moreover, $\widetilde{L}^2_n$ contains a cycle   containing one edge $e_{\ell_2}$ of $T(L_n)$ colored by $\mathcal{C}_{\ell_2}$ with $\ell_2<\ell_1$.
If $\ell_2=0$, then get a contradiction to definition of $\mathcal{C}_0$.
Otherwise, we may go on the above procedure until we have $\ell_{s}=0$ for some $s$.
Thus there is an edge $e_{\ell_s}\in T(L_n)$ colored by $\mathcal{C}_0$ and this edge is not a cut edge for $\widetilde{L}^s_n\in\mathcal{L}_n$.
This final contradiction completes the proof of Lemma~\ref{Lem:good coloring}.
\end{proof}


Let $c$ be an edge coloring of $K_n$ with maximum number of colors such that $K_n$ contains no copy of rainbow $P_k$.
Taking a representing graph $L_n$ of $K_n$ with a maximum component.
The configurations before Theorem~\ref{THM:main} show that $e(L_n)\geq ar(n,k)$.
Suppose that
\begin{equation}\label{eq-for-main-theorem-1}
e(L_n)\geq ar(n,k)+1.
\end{equation}
We will finish our proof of Theorem~\ref{THM:main} by contradictions.

\medskip

{\bf Claim.} $L_n$ is connected.

\medskip

\begin{proof}
Suppose for contrary that $L_n$ is not connected.
Let $C_1,C_2,\ldots, C_{\ell}$ be the components of $L_n$ with  $|C_1|\geq |C_2|\geq \ldots\geq |C_{\ell}|$.
We choose $L_n$ as following.
For $i=1,\ldots,\ell$, subject to the choice of $C_i$, we choose $C_{i+1}$ with $|V(C_{i+1})|=s_{i+1}$ maximum first and then $|E(C_{i+1})|=t_{i+1}$ maximum.

Let $T(C_1)$ be the set of cut edges of $C_1$.
Then $T(C_1)$ is not empty.
Otherwise, $C_1$ is 2-connected.
Let $X=V(C_1)$ and  $Y=V(L_n)\setminus V(C_1)$.
Choose an edge $xy$ in $K_n$ with $x\in X$ and $y\in Y$.
Let $L^\prime_n$ be the representing graph obtained from $L_n$ by adding the edge $xy$ and deleting the edge in $L_n$ with color $c(xy)$.
Then $L^\prime_n$ contains a component with size $s_1+1$, a contradiction to our choice of $L_n$.
Moreover, each edge between $X$ and $Y$ are colored by $T(C_1)$.

Let $\mathcal{L}^\ast_n$ be the set of representing graphs of $K_n$ containing a component with $s_1$ vertices and $t_1$ edges. Let
$$\mathcal{C}^\ast=\bigcap_{L^\prime_n \in \mathcal{L}^\ast_n}\{c(e):e\in T(L^\prime_n[C_1])\}.$$

Then each edge of $K_n$ between $X$ and $Y$ is colored by a color in $\mathcal{C}^\ast$.
Otherwise, as the above argument (for some $L^\prime_n \in \mathcal{L}^\ast_n$), there is a representing graph  containing a component with size $s_1+1$, a contradiction.
In particular, we show that  $\mathcal{C}^\ast$ is not empty.
Hence, it follows from Lemma~\ref{Lem:good coloring} that the pair $(c,K_n[C_1])$ is a good coloring with a representing graph $L^1_n[C_1]$ and a set of cut edges $X_1\subseteq T(L^1_n[C_1])$ such that $\mathcal{C}^\ast\subseteq c(X_1)$.
Let $P^1$ be a longest path in  $L^1_n[C_1]-X_1$ on $k_1$ vertices.
Without loss of generality, let $\widetilde{C}_1$ be the component of $L^1_n[C_1]-X_1$ containing a copy of $P_{k_1}$.
Assume  $L^1_n-X_1-V(\widetilde{C}_1)$ contains a path $\widetilde{P}^1$ on $k-k_1$ vertices.
Since each edge between $P^1$ and $\widetilde{P}^1$ are  colored by $c(X_1)$, $K_n$ contains a rainbow copy of $P_k$, a contradiction.
Thus $L^1_n-X_1-V(\widetilde{C}_1)$ is $P_{k-k_1}$-free.
If $(k-1)/2\leq k_1\leq k-3$, then since the number of components of $L^1_n-X_1$  is at least  $|c(X_1)|+2$, by Lemma~\ref{Lemma:L-is-nonconnected}, we have $e(L^1_n)\leq ar(n,k)$, a contradiction to (\ref{eq-for-main-theorem-1}).
If $k_1=k-2$, then $L^1_n-X_1-V(\widetilde{C}_1)$ is an independent set.
In this case we can consider  $K_n[C_1]$ since it contains $ar(n,k)+1$ colors.
Suppose that $k_1< (k-1)/2$.
Then we consider the subgraph $G_2$  of $G_1=K_n$ obtained by deleting $V(C_1)$ and the edges colored by $c(E(C_1))$.
Then as the previous argument, by Lemma~\ref{Lem:good coloring}, the pair $(c,G_2[C_2])$ is a good coloring  with a representing graph $L^2_n[C_2]$ and a set of cut edges $X_2\subseteq T(L^2_n[C_2])$.
Let $\widetilde{C}_2$ be the component of $L^2_n[C_2]-X_2$  containing a longest path $P^2$ on $k_2$ vertices.
If $(k-1)/2\leq k_2\leq k-3$,  then by Lemma~\ref{Lemma:L-is-nonconnected}, we have $e(L^2_n)\leq ar(n,k)$, a contradiction (other components of $L^2_n-X_1-X_2$ do not contain a path on $k-k_2$ vertices).
If $k_1=k-2$, then $L^1_n-X_1-X_2-V(\widetilde{C}_2)$ is an independent set.
Hence we can consider $K_n[C_1\cup C_2]$ since $K_n[C_1\cup C_2]$ contains $ar(n,k)+1$ colors.
Suppose that $k_2< (k-1)/2$.
We may go on this procedure and obtain $X_{\ell}$ such that $L^\ell_n[C_\ell]-X_{\ell}$ ($X_\ell$ can be an empty set) contains a longest path $P^{\ell}$ on $k_\ell<(k-1)/2$ vertices or $k-2$ vertices.
If $P^\ell$ has less than $(k-1)/2$ vertices then each component of $L_n-\bigcup_{i=1}^{\ell}X_i$ dose not contain a path on at least $(k-1)/2$ vertices, where $L_n=\bigcup_{i=1}^\ell L_n^i[C_i]$.
Moreover, the number of components of $L_n-\bigcup_{i=1}^{\ell}X_i$  is  $\sum_{i=1}^{\ell}(c(X_i)+1)$.
Thus, since $\lceil(k-1)/2\rceil-1\leq k-3$,  by Lemma~\ref{Lemma:L-is-nonconnected}, we have $e(L_n)\leq ar(n,k)$, a contradiction to (\ref{eq-for-main-theorem-1}).
If $P^\ell$ has $k-2$ vertices, then each of $C_i$ is a tree for $1\leq i\leq \ell-1$.
Let $x_1x_2$ be a pendent edge of $C_1$, where $x_1$ is a leaf of $C_1$.
If there is an edge color by $c(x_1x_2)$ between $V(C_1)\setminus\{x_1\}$ and $L_n-V(C_1)$, then $K_n-\{x_1\}$ contains $ar(n,k)+1$ colors.
Thus we may consider the graph $K_n-\{x_1\}$.
Hence the edges between $V(C_1)\setminus\{x_1\}$ and $L_n-V(C_1)$ are not colored by $c(x_1x_2)$.
In particular, the edges between $x_2$ and $L_n-V(C_1)$ are not colored by $c(x_1x_2)$.
Thus there is a path on $k$ vertices containing the edge $x_1x_2$, a contradiction.
The proof of the claim is complete.
\end{proof}

By the claim, $L_n$ is a connected graph. We divide the proof basing on the parity of $k$.

\medskip

{\bf Case 1. $k$ is odd, i.e., $k=2\ell+1$.}

\medskip
For $k=5,7$, we have $ar(n,k)+1>\mbox{ex}_{\mbox{con}}(n,P_k)$.
Hence $L_n$ contains a copy of $P_k$, a contradiction.
Let $k\geq 9$, i.e., $\ell\geq 4$.
Note that $ar(n,k)+1> \max\{h(n,k-1,2),h(n,k-1,\ell-1)\}$.
By Corollary~\ref{Coro:1} and (\ref{eq-for-main-theorem-1}), each connected representing subgraph of $K_n$ is  a subgraph of $H(n,k-1,1)$.

Let $A\cup B \cup C$ be the partition of $H(n,k-1,1)$ in definition.
Thus, since $L_n$ is connected, each vertex in $B$ has degree one in $L_n$.
Basic calculation shows that if $n>(5\ell-1)/2> (5\ell^2-11\ell)/(2\ell-4) $, then $ar(n,k)+1> h (n,k-1,1)$, a contradiction.
Hence, we can assume $n\leq(5\ell-1)/2$.
By (\ref{eq-for-main-theorem-1}), there are at most $n-2\ell-1\leq (5\ell-1)/2-2\ell<2\ell-4$ non-edges of $L_n$ inside  $A\cup C$.
Let $L^\ast_n$ be the representing graph obtained from $L_n$ by adding an edge $e$ inside $B$ and deleting the edge of $L_n$ colored by $c(e)$.
Thus $L^\ast_n$ contains at most $n-k+1$ with degree one and hence $L^\ast_n$  is not a subgraph of $H(n,k-1,1)$, a contradiction when $L^\ast_n$ is connected.
Suppose that $L^\ast_n$ is not connected.
Then $L^\ast_n$ contains a unique isolated vertex, say $x$.
Let $L^\ast_{n-1}=L_n^\ast-\{x\}.$
Then we have $e(L^\ast_{n-1})\geq ar(n,k)+1\geq ar(n-1,k)+1$.
Go on the previous arguments repeatedly we can finally get a contradiction.
The proof of Case 1 is complete.

\medskip

{\bf Case 2. $k$ is even, i.e., $k=2\ell+2$.}

\medskip

Note that $ar(n,k)+1\geq\max\{h(n,k-1,2)+1,h(n,k-1,\ell-1)\}$.
By Corollary~\ref{Coro:2} and (\ref{eq-for-main-theorem-1}), for each  connected representing graph $L_n$ of $K_n$, we have the following:
\begin{itemize}
  \item $(a)$ $L_n$ is a subgraph of $H(n,k-1,1)$, or
  \item $(b)$ $L_n$ is a subgraph of $H(n,k-1,\ell)$, or
  \item $(c)$ $L_n$ is an acyclic $P_6$-free graph  when $k=6$, or
  \item $(d)$ $L_n=H(n,k-1,\ell-1)$.
\end{itemize}

For $(b)$, let $A\cup B \cup C$ be the partition of $H(n,k-1,\ell)$ in definition, i.e, $|A|=\ell$, $|C|=1$ and $|B|=n-\ell-1$.
Let $X=A$ and $Y=B\cup C$.
By (\ref{eq-for-main-theorem-1}) there are at least $(\ell-1) |B\cup C|+3$ edges between $X$ and $Y$.
Hence, there exists a vertex $y$ in $Y$ with degree $\ell$.
Moreover, since  $L_n[X,Y\setminus\{y\}]$ contains a subgraph on $2\ell$ vertices containing $X$ with  $(\ell-1)\ell+2$ edges, by Lemma~\ref{P2l}, $L_n[X,Y\setminus\{y\}]$ contains a cycle, $\widetilde{C}$, of length $2\ell$.
Let $yy^\prime$ be an edge in $K_n[Y]$ with $y^\prime\notin V(\widetilde{C})$.
Then one can find a rainbow path of length $2\ell+2$ in $K_n[V(\widetilde{C})\cup \{y,y^\prime\}]$, a contradiction.
For $(c)$, we have $e(L_n)\leq n-1$, contradicts (\ref{eq-for-main-theorem-1}).
For $(d)$, let $L_n=H(n,k-1,\ell-1)$.
Let $L^\ast_n$ be representing graph obtained from $L_n$ by adding an edge $e$ not in $L_n$ and deleting the edge colored by $c(e)$ in $L_n$.
It is obviously that $L_n^\ast$ contains a copy of $P_k$, a contradiction.
Finally, let $L_n$ be a subgraph of $H(n,k-1,1)$.
Then we have $e(L_n)\leq h(n,k-1,1)$.
Combining with (\ref{eq-for-main-theorem-1}), we have $n\leq \lfloor(5\ell+3)/2\rfloor$ for $\ell\geq 3$.
If $k\geq 8$, i.e., $\ell\geq 3$, then by (\ref{eq-for-main-theorem-1}), there are at most $n-2\ell-2\leq (5\ell+3)/2-2\ell\leq2\ell-3$ non-edges of $L_n$ inside  $A\cup C$.
We get a contradiction similarly as Case 1.
If $k=6$, then by (\ref{eq-for-main-theorem-1}) we have  $L_n=H(n,5,1)$.
Hence, it is easy to check that $K_n$ contains a rainbow copy of $P_6$.
This final contradiction completes our proof of Theorem~\ref{THM:main}.

\medskip

\appendix
\section{Proof of Lemma~\ref{Lemma:L-is-nonconnected}}

\begin{proof}
The proof of Lemma~\ref{Lemma:L-is-nonconnected} bases on Theorems~\ref{THM:path} and \ref{THM:connected-path} and some basic calculations.
Let $k_1\geq k_2\geq 3$ and $t\geq 1$. Let $n_0\geq k_1-1$, $n_1\geq n_2\geq \ldots\geq n_t\geq 1$ and $n=\sum_{i=0}^{t}n_i\geq k_1+k_2-1\geq 5$.
Since $\mbox{ex}_{\mbox{con}}(n_i,P_{k_1})\leq n_i-1$ for $k_1\leq 4$ and $n_i\geq k_1$, the lemma holds easily for $k_1\leq 4$.
Hence, we may suppose that $k_1\geq  5$.

\medskip

{\bf Claim.} Let $m=s(k-1)+r$ with $s\geq 0$, $k\geq 2$ and $1\leq r\leq k-1$. Let $m_1\geq m_2\geq \ldots \geq m_t\geq 1$ and  $m=\sum_{i=1}^{t}m_i$. Then $\sum_{i=1}^{t}\mbox{ex}(m_i,P_k)+t-1\leq \mbox{ex}(m,P_k)+s.$

\medskip

\begin{proof} Clearly, we have $\sum_{i=1}^{t}\mbox{ex}(m_i,P_k)\leq \mbox{ex}(m,P_k).$
Thus the claim holds trivially for $t-1\leq s$.
Now suppose that $t-1>s$.
Then we have $1\leq m_t\leq m_{t-1}\leq k-2.$
Note that $\mbox{ex}(n,P_k)={ n \choose 2}$ for $n\leq k-1$.
By Theorem~\ref{THM:path},
we have $\mbox{ex}(m_{t-1},P_k)+\mbox{ex}(m_{t},P_k)+1={m_{t-1} \choose 2}+{m_t \choose 2}+1\leq \mbox{ex}(m_{t-1}+m_t,P_k)$.
Thus we have
$$\sum_{i=1}^{t}\mbox{ex}(m_i,P_k)+t-1\leq \sum_{i=1}^{t-2}\mbox{ex}(m_i,P_k)+\mbox{ex}(m_{t-1}+m_t,P_k)+t-2.$$
If $t-2\leq s$, then we are done.
Suppose that $t-2>s$.
Repeating the above argument $t-s-2$ times (reorder $m_1,m_2,\ldots,m_{t-2},m_{t-1}+m_t$), we have
$$\sum_{i=1}^{t}\mbox{ex}(m_i,P_k)+t-1\leq \sum_{i=1}^{s}\mbox{ex}(m^\prime_i,P_k)+\mbox{ex}\left(\sum_{i=s+1}^{s+2}m^\prime_{i},P_k\right)+s\leq \mbox{ex}(m,P_k)+s,$$
where $1\leq m^\prime_{s+1}\leq m^\prime_{s+2}\leq k-2$ and   $\sum_{i=1}^{s+2} m^\prime_i=m$.
The proof of the claim is complete.\end{proof}

Let $n-n_0=s^\prime(k_2-1)+r^\prime$ with $1\leq r^\prime\leq k_2-1$. By the claim, we have
$$\mbox{ex}_{\mbox{con}}(n_0,P_{k_1})+\sum_{i=1}^{t}\mbox{ex}_{\mbox{con}}(n_i,P_{k_2})+t-1\leq \mbox{ex}_{\mbox{con}}(n_0,P_{k_1})+\mbox{ex}(n-n_0,P_{k_2})+s^\prime.$$
Let $s_1=\lfloor(k_1-2)/2\rfloor$.
Since $k_2\geq 3$, we have $s_1\leq \ell-1$.
We will finish our proof in  the following two cases.

\medskip

{\bf Case 1.} $k_1+k_2-1$ is odd.

\medskip

Let $k_1+k_2-1=k=2\ell+1$. Basic calculation shows that $ar(n,k)=h(k,k-1,1)-1$ for $n\leq(5\ell-2)/2 $ and $ar(n,k)=h(n,k-1,\ell-1)$ for $n\geq (5\ell-2)/2$.
Let $n\leq\lfloor(5\ell-2)/2\rfloor$. We divide the proof into the following three subcases:

(a.1) $\mbox{ex}_{\mbox{con}}(n_0,P_{k_1})={k_1-1 \choose 2}$, i.e, $n_0=k_1-1$.
By Theorem~\ref{THM:path} and a detailed calculation, we have
\begin{align*}
&\mbox{ex}_{\mbox{con}}(n_0,P_{k_1})+\mbox{ex}(n-n_0,P_{k_2})+s^\prime={k_1-1 \choose 2}+ \mbox{ex}(n-k_1+1,P_{k_2})+s^\prime\\
\leq &{k_1-1 \choose 2}+ \mbox{ex}\left(\left\lfloor\frac{5\ell-2}{2}\right\rfloor-k_1+1,P_{k_2}\right)+s^\prime\leq {k_1+k_2-3 \choose 2}+1=h(k,k-1,1)-1.
\end{align*}

(a.2) $\mbox{ex}_{\mbox{con}}(n_0,P_{k_1})=h(n_0,k_1,1)$. By $k_1\geq 5$, calculations in \cite{Balister2008} show that if $k_1$ is even, then $k_1\leq n_0\leq   (5k_1-10)/4$ and if $k_1$ is odd, then $k_1\leq n_0\leq   (5k_1-7)/4$. Hence, we have
\begin{align*}
&\mbox{ex}_{\mbox{con}}(n_0,P_{k_1})+\mbox{ex}(n-n_0,P_{k_2})+s^\prime={k_1-2 \choose 2}+(n_0-k_1+2)+ \mbox{ex}(n-n_0,P_{k_2})+s^\prime\\
<&{k_1-1 \choose 2}+ \mbox{ex}(n-k_1+1,P_{k_2})+s^\prime<  {k_1+k_2-3 \choose 2}+1=h(k,k-1,1)-1,
\end{align*}
where the last strict inequality holds similarly as before.

(a.3) $\mbox{ex}_{\mbox{con}}(n_0,P_{k_1})=h(n_0,k_1,s_1)$, i.e, $n_0\geq   (5k_1-10)/4$ for even $k_1$ and $n_0\geq   (5k_1-7)/4$ for odd $k_1$.
Let $i_1=1$ when $k_1$ is odd and $i_1=0$ when $k_1$ is even. Then
\begin{align*}
&\mbox{ex}_{\mbox{con}}(n_0,P_{k_1})+\mbox{ex}(n-n_0,P_{k_2})+s^\prime\\
\leq&{s_1 \choose 2}+s_1(n_0-s_1)+i_1+ \mbox{ex}\left(\frac{5\ell-2}{2}-n_0,P_{k_2}\right)+s^\prime\\
<& h\left(\frac{5\ell-2}{2},k-1,\ell-1\right)=h(k,k-1,1)-1,
\end{align*}
where the strict inequality holds by Theorem~\ref{THM:path} and a detailed calculation.
Thus the lemma holds for $n\leq(5\ell-2)/2$ in Case 1.

Now we may assume that $n\geq\lceil(5\ell-2)/2\rceil$.
Note that $\mbox{ex}(n^\prime,P_{k_2})+s_1^\prime-\mbox{ex}(n^\prime-1,P_{k_2})-s^\prime_2\leq k_2-2\leq \ell-1$, where $s^\prime_1=\lceil n^\prime/(k_2-1)\rceil-1$ and $s^\prime_2=\lceil (n^\prime-1)/(k_2-1)\rceil-1$.
We divide the proof into the following three subcases:

(b.1) $\mbox{ex}_{\mbox{con}}(n_0,P_{k_1})={k_1-1 \choose 2}$, i.e, $n_0=k_1-1$.
Then
\begin{align*}
&\mbox{ex}_{\mbox{con}}(n_0,P_{k_1})+\mbox{ex}(n-n_0,P_{k_2})+s^\prime\\
= &{k_1-1 \choose 2}+s^\prime{k_2-1 \choose 2}+{r^\prime \choose 2}+s^\prime\\
 < &h(n,k-1,\ell-1),
\end{align*}
where the strict inequality holds by $n\geq\lceil(5\ell-2)/2\rceil$.

(b.2) $\mbox{ex}_{\mbox{con}}(n_0,P_{k_1})=h(n_0,k_1,1)$, i.e, $n_0\leq   (5k_1-10)/4$ for even $k_1$ and $n_0\leq   (5k_1-7)/4$ for odd $k_1$.
Then
\begin{align*}
&\mbox{ex}_{\mbox{con}}(n_0,P_{k_1})+\mbox{ex}(n-n_0,P_{k_2})+s^\prime\\
=& {k_1-2 \choose 2}+(n_0-k_1+2)+ s^\prime{k_2-1 \choose 2}+{r^\prime \choose 2}+s^\prime\\
< &h(n,k-1,\ell-1),
\end{align*}
where the strict inequality holds by $n\geq\lceil(5\ell-2)/2\rceil$.

(b.3) $\mbox{ex}_{\mbox{con}}(n_0,P_{k_1})=h(n_0,k_1,s_1)$, i.e, $n_0\geq   (5k_1-10)/4$ for even $k_1$ and $n_0\geq   (5k_1-7)/4$ for odd $k_1$.
Let $i_1=1$ when $k_1$ is odd and $i_1=0$ when $k_1$ is even.
Recall that $s_1\leq \ell-1$, we  have
\begin{align*}
&\mbox{ex}_{\mbox{con}}(n_0,P_{k_1})+\mbox{ex}(n-n_0,P_{k_2})+s^\prime\\
=& {s_1 \choose 2}+s_1(n-s_1)+i_1+s^\prime{k_2-1 \choose 2}+{r^\prime \choose 2}+s^\prime\\
< & h(n,k-1,\ell-1),
\end{align*}
where the strict inequality holds by $n\geq\lceil(5\ell-2)/2\rceil$.
We finish the proof of the lemma for Case 1.

\medskip

{\bf Case 2.} $k_1+k_2-1$ is even.

\medskip

Let $k_1+k_2-1=k=2\ell+2$. Basic calculation shows that $a(n,k)=h(k,k-1,1)-1$ for $n\leq(5\ell+2)/2 $ and $a(n,k)=h(n,k-1,\ell-1)-1$ for $n\geq (5\ell+2)/2$.
Let $n\leq\lfloor(5\ell+2)/2\rfloor$.
We divide the proof into the following three subcases:

(a.1) $\mbox{ex}_{\mbox{con}}(n_0,P_{k_1})={k_1-1 \choose 2}$, i.e, $n_0=k_1-1$.
Then we have
\begin{align*}
&\mbox{ex}_{\mbox{con}}(n_0,P_{k_1})+\mbox{ex}(n-n_0,P_{k_2})+s^\prime={k_1-1 \choose 2}+ \mbox{ex}(n-k_1+1,P_{k_2})+s^\prime\\
\leq &{k_1-1 \choose 2}+ \mbox{ex}\left(\left\lfloor\frac{5\ell+2}{2}\right\rfloor-k_1+1,P_{k_2}\right)+s^\prime< {k_1+k_2-3 \choose 2}+1=h(k,k-1,1)-1.
\end{align*}

(a.2) $\mbox{ex}_{\mbox{con}}(n_0,P_{k_1})=h(n_0,k_1,1)$, i.e, $n_0\leq   (5k_1-10)/4$ for even $k_1$ and $n_0\leq   (5k_1-7)/4$ for odd $k_1$.
Then
\begin{align*}
&\mbox{ex}_{\mbox{con}}(n_0,P_{k_1})+\mbox{ex}(n-n_0,P_{k_2})+s^\prime={k_1-2 \choose 2}+(n_0-k_1+2)+ \mbox{ex}(n-n_0,P_{k_2})+s^\prime\\
< & {k_1+k_2-3 \choose 2}+1=h(k,k-1,1)-1.
\end{align*}

(a.3) $\mbox{ex}_{\mbox{con}}(n_0,P_{k_1})=h(n_0,k_1,s_1)$, i.e, $n_0\geq   (5k_1-10)/4$ for even $k_1$ and $n_0\geq   (5k_1-7)/4$ for odd $k_1$.
Let $i_1=1$ when $k_1$ is odd and $i_1=0$ when $k_1$ is even. Then
\begin{align*}
&\mbox{ex}_{\mbox{con}}(n_0,P_{k_1})+\mbox{ex}(n-n_0,P_{k_2})+s^\prime\\
\leq &{s_1 \choose 2}+s_1(n_0-s_1)+i_1+ \mbox{ex}\left(\left\lfloor\frac{5\ell+2}{2}\right\rfloor-n_0,P_{k_2}\right)+s^\prime\\
< & h\left(\left\lfloor\frac{5\ell+2}{2}\right\rfloor,k-1,\ell-1\right)=h(k,k-1,1)-1.
\end{align*}

Let $n\geq\lceil(5\ell+2)/2\rceil$.
Recall that $\mbox{ex}(n^\prime,P_{k_2})+s_1^\prime-\mbox{ex}(n^\prime-1,P_{k_2})-s^\prime_2\leq k_2-2\leq \ell-1$.
We divide the proof into the following three subcases:

(b.1) $\mbox{ex}_{\mbox{con}}(n_0,P_{k_1})={k_1-1 \choose 2}$, i.e, $n_0=k_1-1$.
Then
\begin{align*}
&\mbox{ex}_{\mbox{con}}(n_0,P_{k_1})+\mbox{ex}(n-n_0,P_{k_2})+s^\prime={k_1-1 \choose 2}+ \mbox{ex}(n-k_1+1,P_{k_2})+s^\prime\\
=& {k_1-1 \choose 2}+s^\prime{k_2-1 \choose 2}+{r^\prime \choose 2}+s^\prime <  h(n,k-1,\ell-1)-1,
\end{align*}
where the strict inequality holds by $n\geq\lceil(5\ell+2)/2\rceil$.

(b.2) $\mbox{ex}_{\mbox{con}}(n_0,P_{k_1})=h(n_0,k_1,1)$, i.e, $n_0\leq   (5k_1-10)/4$ for even $k_1$ and $n_0\leq   (5k_1-7)/4$ for odd $k_1$.
Then
\begin{align*}
&\mbox{ex}_{\mbox{con}}(n_0,P_{k_1})+\mbox{ex}(n-n_0,P_{k_2})+s^\prime={k_1-2 \choose 2}+(n_0-k_1+2)+ \mbox{ex}(n-n_0,P_{k_2})+s^\prime\\
< & {\ell-1 \choose 2}+(\ell-1)(n-\ell+1)+1=h(n,k-1,\ell-1)-1,
\end{align*}
where the strict inequality holds by $n\geq\lceil(5\ell+2)/2\rceil$.

(b.3) $\mbox{ex}_{\mbox{con}}(n_0,P_{k_1})=h(n_0,k_1,s_1)$, i.e, $n_0\geq   (5k_1-10)/4$ for even $k_1$ and $n_0\geq   (5k_1-7)/4$ for odd $k_1$.
Let $i_1=1$ when $k_1$ is odd and $i_1=0$ when $k_1$ is even.
Recall that $s_1\leq \ell-1$, we have
\begin{align*}
&\mbox{ex}_{\mbox{con}}(n_0,P_{k_1})+\mbox{ex}(n-n_0,P_{k_2})+s^\prime={s_1 \choose 2}+s_1(n_0-s_1)+i_1+ \mbox{ex}(n-n_0,P_{k_2})+s^\prime\\
< & {\ell-1 \choose 2}+(\ell-1)(n-\ell+1)+1=h(n,k-1,\ell-1)-1,
\end{align*}
where the strict inequality holds by $n\geq\lceil(5\ell+2)/2\rceil$.
The proof is thus complete.
\end{proof}
\end{document}